 \definecolor{uuuuuu}{rgb}{0.26666666666666666,0.26666666666666666,0.26666666666666666}
\definecolor{qqqqff}{rgb}{0.,0.,1.}
\theoremstyle{plain}
\newtheorem{thm}{Theorem}[section]
\newtheorem{theorem}[thm]{Theorem}
\newtheorem{corollary}[thm]{Corollary}
\newtheorem{conjecture}[thm]{Conjecture}
\theoremstyle{definition}
\newtheorem{definition}[thm]{Definition}
\newtheorem{example}[thm]{Example}
\newtheorem{thevarthm}[thm]{\varthmname}
\newenvironment{varthm*}[1]{\trivlist\item[]{\bf #1.}\it}{\endtrivlist}
\renewcommand\geq{\geqslant}
\renewcommand\leq{\leqslant}
\let\tilde=\widetilde
\newcommand\be{\begin{eqnarray*}}
\newcommand\ee{\end{eqnarray*}}
\newcommand\C{\mathbb C}
\newcommand\newop[2]{\def#1{\mathop{\rm #2}\nolimits}}
\newop\log{log}
\newop\ord{ord}
\newop\Gal{Gal}
\newop\SL{SL}
\newop\Bl{Bl}
\newop\mult{mult}
\newop\mass{mass}
\newop\div{div}
\newop\codim{codim}
\newop\sing{sing}
\newop\vdim{vdim}
\newop\edim{edim}
\newop\Ass{Ass}
\newop\size{size}
\newop\reg{reg}
\newop\satdeg{satdeg}
\newop\supp{supp}
\newop\Neg{Neg}
\newop\Nef{Nef}
\newop\Nefh{Nef_H}
\newop\Eff{Eff}
\newop\Zar{Zar}
\newop\MB{MB}
\newop\MBxC{MB\mathit{(x,C)}}
\newop\NnB{NnB}
\newop\Bigg{Big}
\newop\Sing{Sing}
\newop\Effbar{\overline{\Eff}}
\newcommand\wtilde[1]{\widetilde{#1}}
\begin{document}
\title[On the local negativity of surfaces]{On the local negativity of surfaces with numerically trivial canonical class}
\author{Roberto Laface \and Piotr Pokora}
\date{\today}
   
\address{Roberto Laface \newline Institut f\"ur Algebraische Geometrie,
    Leibniz Universit\"at Hannover,
    Welfengarten 1, 
    D-30167 Hannover, Germany
}
	\email{laface@math.uni-hannover.de}
\subjclass[2010]{Primary 14C20; Secondary 14J70}
\keywords{rational curve configurations, algebraic surfaces, Miyaoka inequality, blow-ups, negative curves, bounded negativity conjecture, Harbourne constants}

\address{Piotr Pokora \newline Instytut Matematyki,
   Pedagogical University of Cracow,
   Podchor\c a\.zych 2,
   PL-30-084 Krak\'ow, Poland.}
   \curraddr{Institut f\"ur Mathematik,
   Johannes Gutenberg Universit\"at Mainz,
   Staudingerweg 9,
   D-55099 Mainz, Germany}
   \email{piotrpkr@gmail.com}

\maketitle

\thispagestyle{empty}
\begin{abstract}
In this note we study the local negativity for certain configurations of smooth rational curves in smooth surfaces with numerically trivial canonical class. We show that for such rational curves there is a bound for the so-called local Harbourne constants, which measure the local negativity phenomenon. Moreover, we provide explicit examples of interesting configurations of rational curves in some K3 and Enriques surfaces and compute their local Harbourne constants.
\end{abstract}


\section{Introduction}

In this note we continue studies on the local negativity of algebraic surfaces. In last years there is a resurgence around questions related to negative curves on algebraic surfaces. One of the most challenging problems is the Bounded Negativity Conjecture (BNC in short).

\begin{conjecture}[Bounded Negativity Conjecture]
Let $X$ be a smooth projective surface defined over a field of characteristic zero. Then there exists an integer $b(X) \in \mathbb{Z}$ such that for all \emph{reduced} curves $C \subset X$ one has $C^{2} \geq -b(X)$.
\end{conjecture}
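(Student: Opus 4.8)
The plan is to first reduce the statement for arbitrary reduced curves to the case of irreducible curves, then control an irreducible negative curve through the adjunction formula, and finally split the analysis according to the positivity of the (anti)canonical class.

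\textbf{Step 1 (reduction to irreducible curves).} Let $D=\sum_{i=1}^{k}C_i$ be reduced and let $D=P+N$ be its Zariski decomposition, with $P$ nef, $N=\sum_i a_iC_i$ the negative part and $P\cdot N=0$. Since $D$ is reduced, $0<a_i\le 1$, and since the intersection matrix of the support of $N$ is negative definite, those curves are linearly independent in $\mathrm{NS}(X)\otimes\R$, so their number is at most $\rho(X)-1$. As $P\cdot C_i=0$ on $\supp N$ we get $N\cdot C_i=D\cdot C_i\ge C_i^{2}$. Hence, assuming every irreducible curve satisfies $C^{2}\ge -b_0$, one finds
\[
D^{2}=P^{2}+N^{2}\ \ge\ N^{2}=\sum_{i} a_i\,(N\cdot C_i)\ \ge\ \sum_{i} a_i\,C_i^{2}\ \ge\ \sum_{i} C_i^{2}\ \ge\ -b_0\,(\rho(X)-1),
\]
using $P^{2}\ge 0$, then $a_i>0$, then $a_i\le 1$ with $C_i^{2}<0$. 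Thus it suffices to bound $C^{2}$ from below for irreducible curves.

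\textbf{Step 2 (adjunction) and Step 3 (positive anticanonical case).} For an irreducible reduced curve the genus formula gives $C^{2}=2p_a(C)-2-K_X\cdot C\ge -2-K_X\cdot C$, so the problem becomes that of bounding $K_X\cdot C$ from above over the (possibly infinite) set of curves with $C^{2}<0$. If $-K_X$ is nef, then $K_X\cdot C\le 0$ for every curve and we obtain $C^{2}\ge -2$; the same bound holds up to a finite correction when $-K_X$ is merely $\Q$-effective, by applying the estimate to curves not contained in the support of an effective representative of $-mK_X$ and treating its finitely many components separately. This already settles every surface with $K_X\numequiv 0$ (so $\kappa(X)=0$: K3, Enriques, abelian, bielliptic, with $C^{2}\ge -2$), as well as del Pezzo and other surfaces with effective anticanonical class.

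\textbf{Step 4 (remaining cases and the main obstacle).} What remains are the surfaces on which $K_X$ is positive along curves of large self-intersection: surfaces of general type and, above all, the rational surfaces obtained by blowing up $\P^2$ at many points, where $-K_X$ ceases to be effective. Here I would attempt to bound $K_X\cdot C$, equivalently $\sum_i C_i^{2}$ for a configuration, via a logarithmic (orbifold) Bogomolov--Miyaoka--Yau inequality applied to the curve together with its singularities: such an inequality estimates $\sum_i C_i^{2}$ in terms of the topological Euler characteristic of $X$ and the combinatorics of the singular points, and is precisely the mechanism producing the local Harbourne constants studied in this note. The main obstacle lies exactly here. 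Miyaoka-type inequalities yield effective bounds only for configurations with mild singularities (ordinary, or of prescribed analytic type), so they control the negativity of special families but not of an \emph{arbitrary} reduced curve, while on a blow-up of $\P^2$ at very general points the existence of curves of arbitrarily negative self-intersection is bound up with Nagata--SHGH-type statements. Consequently this strategy proves the conjecture unconditionally when $-K_X$ is nef or $\Q$-effective and reduces the general case to a uniform Miyaoka-type bound on curve configurations, which is the genuine crux of the problem.
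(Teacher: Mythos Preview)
The statement you are addressing is a \emph{conjecture}, not a theorem, and the paper does not prove it. On the contrary, the paper explicitly says that the Bounded Negativity Conjecture is ``widely open'' and only establishes, via the Miyaoka inequality, a bound on the Harbourne constants of transversal configurations of smooth rational curves on K3 and Enriques surfaces. So there is no proof in the paper for your proposal to be compared against.

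Your write-up is not a proof either, and to your credit you say so in Step~4. Steps~1--3 correctly recover the standard easy cases: the reduction from reduced to irreducible curves via the Zariski decomposition is well known (it is essentially the argument in \cite{Duke}; one small quibble is that the number of components of $N$ is bounded by $\rho(X)$, not necessarily $\rho(X)-1$, unless $P\neq 0$), and the adjunction argument gives $C^{2}\ge -2$ whenever $-K_X$ is nef, in particular when $K_X\numequiv 0$. The extension to $-K_X$ $\Q$-effective by handling the finitely many components of an anticanonical representative separately is also standard.

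The genuine gap is exactly where you locate it: once $-K_X$ is neither nef nor $\Q$-effective---already for blow-ups of $\P^{2}$ at ten or more very general points, and for surfaces of general type---no known Miyaoka--Yau or Bogomolov-type inequality controls $K_X\cdot C$ uniformly over all irreducible curves. The log-BMY inequalities you invoke do give bounds for \emph{specific} configurations (this is precisely what the paper exploits), but they do not yield a bound valid for an arbitrary reduced curve. So your proposal does not close the conjecture; it reproduces the known partial results and correctly isolates the open core of the problem.
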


It is easy to see that the number $b(X)$ can be arbitrary large depending on $X$. In order to observe this phenomenon, consider the blow-up $X_s$ of the projective plane $\mathbb{P}^{2}_{\mathbb{C}}$ along $s\gg 0$ mutually distinct points $P_1, \dots, P_s$ lying on a line $l$. It is easy to see that the strict transform of $l$ has the form $\tilde{l}_{s} = H - E_1 - \cdots - E_s$, where $H$ is the pull back of $\mathcal{O}_{\mathbb{P}^2_{\mathbb{C}}}(1)$ and $E_1, \dots, E_s$ are the exceptional divisor, and $\tilde{l}_{s}^{2} = -s+1$. Moreover, it is not difficult to see that $b(X_s) = s(s-1)$. In order to avoid such situations we can define an asymptotic version of self-intersection numbers, in the case of arbitrary blow-ups of the projective plane one can divide the self-intersection number of a reduced curve by the number of point we blown up our surface. It turns out that this approach is more effective in the context of the BNC. 

It is worth pointing out that the BNC is widely open, but there are some cases for which $b(X)$ is known. It can be shown that the BNC is true for minimal models with Kodaira dimension equal to zero. In particular, we know that for surfaces with numerically trivial canonical class every reduced and irreducible curve $C$ has $C^{2} \geq -2$, which is a consequence of the adjuction formula. However, it is not known whether the BNC still holds for blow-ups of those surfaces along sets of points.

The main aim of this note is to study the BNC for blow-ups of surfaces with numerically trivial canonical class from a point of view of Harbourne constants, which were introduced in \cite{BdRHHLPSz}, and allow to measure the negativity phenomenon asymptotically. Before we define the main object of this paper, we need to recall some standard notions.

\begin{definition}[The numbers $t_i$]
   Let ${C}$ be a configuration of finitely many mutually distinct smooth curves in a projective surface $X$.
   We say that $P$ is an $r$-fold point of the configuration ${C}$,
   if it is contained in exactly $r$ irreducible components of ${C}$. The union of all $r$-fold points
   $P\in {C}$ for $r\geq 2$, is the set $\Sing({C})$ of singular points of ${C}$.
   We set the number $t_r=t_r({C})$ to be the number of $r$-fold points in ${C}$.
\end{definition}

We will mostly deal with configurations of smooth curves having only transversal intersection points. Letting ${C} = C_1 + \dots + C_n$ be a configuration of such curves on $X$, consider the blow-up of $X$ along $\Sing({C})$, with the exceptional divisors $E_1,\ldots,E_s$, where $s$ is the cardinality of ${\rm Sing}({C})$. Let $\wtilde{C}$ be the strict transform of $C$. Then the divisor
   $$\wtilde{C}+E_1+\cdots+E_s$$
is a \textit{simple normal crossing} divisor on $Y$, meaning that
\begin{enumerate}
\item it is reduced;
\item its irreducible components are all smooth;
\item there are at most two irreducible components going through a point of the divisor (i.e.,~at the singular points, the divisor locally looks like the intersection of the coordinate axes in $\C^2$).    
\end{enumerate}

Simple normal crossing divisors come pretty handy, as it is quite easy to compute their Chern numbers, a fact that we will employ in our computations (see the proof of Theorem \ref{k3}). In the present note, we are interested in (local) Harbourne constants attached to transversal configrations, i.e.,~configurations of curves such that all irreducible components are smooth and all intersections are transversal. They can be viewed as a way to measure the average negativity coming from singular points in the configuration. 
\begin{definition}[Local Harbourne constants of a transversal configuration]\label{def:H-constant TA}
   Let $X$ be a smooth projective surface.
   Let $C=\sum_{i=1}^\tau C_i$ be a transversal configuration of curves in $X$ with $s=s(C)$ singular points. Let $\widetilde{C}=\sum_{i=1}^\tau \widetilde{C_i}$ be the strict transform of $C$ in the blow-up $Y$ of $X$ at $s$ singular points of $C$, $\widetilde{C_i}$ being the strict transform of $C_i$ in $Y$.
   The rational number
   \begin{equation}\label{eq:TA Harbourne constant}
      h(X;C)=h(C)=\frac{\widetilde{C}^2}{s} = \frac1s\left(C^2-\sum\limits_{P\in \Sing(C)}m_P^2\right),
   \end{equation}
where $m_P$ is the multiplicity of the divisor $C$ at $P$, is the \emph{local Harbourne constant} of $C\subset X$.
\end{definition}

We can also express the local Harbourne constant in terms of the $t_i$'s as follows:
\[ h(C) = \frac{1}{s}\left(C^2-\sum\limits_{r \geq 2}r^2t_r\right).\]
Notice that the notion of local Harbourne constant is a natural variation of the original Harbourne constant in \cite{BdRHHLPSz}, the latter to be understood as a global version of the first one. In fact, the local Harbourne constant defined in the present paper considers \emph{only one reduced curve}, while the Harbourne constant of \cite{BdRHHLPSz} is computed by considering all curves in $X$ at once.

\section{A bound on local Harbourne constants}
We would like to focus on the case of configurations of smooth rational curves in surfaces with the numerically trivial canonical class having only transversal intersection points. We start with the following result which is a generalization of a result due to Miyaoka \cite[Section~2.4]{Miyaoka1}.
\begin{theorem}
\label{k3}
Let $X$ be a smooth complex projective surface with numerically trivial canonical class and let ${C} \subset X$ be a configuration of smooth rational curves having $n$ irreducible components and only transversal intersection points. Then
$$4n - t_{2} + \sum_{r \geq 3} (r-4)t_{r} \leq 3c_2(X) \leq 72.$$
\end{theorem}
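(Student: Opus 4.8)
The plan is to pass to the blow-up $Y$ of $X$ at the $s = \#\Sing(\mathcal{C})$ singular points of the arrangement, and to apply a Miyaoka-type inequality (logarithmic Bogomolov–Miyaoka–Yau, or Miyaoka's orbifold version) to the simple normal crossing divisor $\widetilde{C} + E_1 + \cdots + E_s$ on $Y$, where $\widetilde{C}$ is the strict transform of $C = C_1 + \cdots + C_n$. The inequality one wants says, roughly, that for a reduced SNC divisor $D$ on a smooth projective surface $Y$ with non-negative Kodaira dimension (after contracting $(-1)$-curves that meet $D$ at most twice, etc.), one has $c_1^2(Y, D) \le 3\, c_2(Y, D)$, where these are the logarithmic Chern numbers $c_1^2 = (K_Y + D)^2$ and $c_2 = e(Y) - e(D)$; equivalently $\overline{c}_1^2 \le 3\overline{c}_2$. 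The key point for us is that $X$ being K3 or Enriques forces $c_2(X) = e(X) \in \{24, 12\}$, and both $K_X^2 = 0$ and $K_X \equiv 0$ (numerically), which is what makes all the terms collapse into the clean bound $72$ on the right-hand side.

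The computation then proceeds in three bookkeeping steps. First I would compute $(K_Y + \widetilde{C} + \sum E_i)^2$: writing $\pi\colon Y \to X$ for the blow-up, $K_Y = \pi^* K_X + \sum E_i$, and $\widetilde{C} = \pi^* C - \sum_{P} m_P E_P$, so $K_Y + \widetilde{C} + \sum E_i = \pi^*(K_X + C) + \sum_P (2 - m_P) E_P$. Squaring and using $\pi^* \equiv 0$ for $K_X$, $E_P^2 = -1$, and $C^2 = \sum_i C_i^2 + 2\sum_P \binom{m_P}{2}$ with each rational curve having $C_i^2 = -2$ (adjunction), this contributes terms in $n$ and in the $t_r$. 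Second, I would compute the topological Euler characteristic $e(\widetilde{C} + \sum E_i)$ of the divisor: its components are $n$ rational curves (each $\widetilde{C_i}$ still rational with $e = 2$) and $s$ exceptional $\mathbb{P}^1$'s, and they meet in a total of $\sum_i(\text{something})$ nodes; an inclusion–exclusion over components and double points gives $e(\widetilde{C} + \sum E_i)$ in terms of $n$, $s$, and $\sum_r \binom{r}{2} t_r$ (the number of branches, i.e.\ incidences, equals $\sum_r r\, t_r$). Then $c_2(Y, D) = e(Y) - e(D) = \big(e(X) + s\big) - e(D)$. Third, I would substitute everything into $c_1^2(Y,D) \le 3\, c_2(Y,D)$, collect terms, and simplify; the $s = \sum_r t_r$ and $\sum_r \binom{r}{2} t_r$ pieces should combine so that the coefficient of $t_r$ becomes exactly $-(r - 4) \cdot(-1) = (4 - r)$ on the left after moving terms, yielding $4n - t_2 + \sum_{r\ge 3}(r-4) t_r \le 3 e(X) \le 72$.

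The main obstacle is verifying that the hypotheses of the logarithmic Miyaoka inequality actually apply: one must check that the pair $(Y, \widetilde{C} + \sum E_i)$ (or rather its appropriate minimal model in the sense of log surfaces) is not of the excluded type — e.g.\ that no component of the log-divisor is a $(-1)$-curve meeting the rest of the divisor in at most one point, which would need to be contracted first and could a priori alter the count. Here the rationality and the transversality of the arrangement, together with $\kappa(X) = 0$, should be used to rule out degenerate contractions (or to show that the only possible contractions keep the inequality in the right direction, as in Miyaoka's original argument). A secondary subtlety is the precise form of $e(D)$ for the SNC divisor — getting the inclusion–exclusion right, remembering that a blow-up of a point lying on $k$ of the curves produces an exceptional $\mathbb{P}^1$ meeting exactly $k$ strict transforms — but this is a routine, if careful, calculation once the framework is in place.
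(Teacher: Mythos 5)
Your overall strategy (blow up and apply a logarithmic Miyaoka--Yau inequality, using $K_X \equiv 0$ and $c_2(X)\in\{12,24\}$) is the same as the paper's, but your specific choice of log pair does not produce the stated inequality, and the coefficient bookkeeping asserted in your third step is incorrect. If you blow up \emph{all} $s$ singular points and take $D=\widetilde{C}+\sum_P E_P$, then $K_Y+D=\pi^*(K_X+C)+\sum_P(2-m_P)E_P$, so
\[
(K_Y+D)^2=-2n+\sum_{r\ge 2}r(r-1)t_r-\sum_{r\ge 2}(r-2)^2t_r=-2n+\sum_{r\ge 2}(3r-4)t_r,
\]
while $e(Y)-e(D)=\bigl(e(X)+s\bigr)-\bigl(2(n+s)-\sum_r r\,t_r\bigr)=e(X)-2n+\sum_r(r-1)t_r$. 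Substituting into $(K_Y+D)^2\le 3\bigl(e(Y)-e(D)\bigr)$ gives $4n-\sum_{r\ge 2}t_r\le 3e(X)$: the coefficient of $t_r$ comes out to $-1$ for every $r$, not $(r-4)$. Since $(r-4)t_r\ge -t_r$ for $r\ge 3$ (strictly for $r\ge 4$), the inequality you obtain is genuinely weaker than the theorem whenever the configuration has a point of multiplicity at least $4$, so the proof does not close as written.

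The discrepancy comes from which exceptional curves are put into the boundary. Adding $E_P$ to the log divisor raises $(K_Y+D)^2$ by $2m_P-3$ but raises $3\bigl(e(Y)-e(D)\bigr)$ by $3(m_P-2)$, so it loosens the inequality by $m_P-3$ per point: harmless for $m_P\le 3$, lossy for $m_P\ge 4$. The paper (following Miyaoka) therefore blows up only the $k$ points of multiplicity $\ge 3$ and applies the inequality to the pair $(Y,M)$ with $M=\widetilde{C}_1+\cdots+\widetilde{C}_n$ alone --- a nodal curve retaining the $t_2$ original double points and containing no exceptional components --- so that $e(M)=2n-t_2$ and $(K_Y+M)^2=L^2-\sum_j(m_j-1)^2=-2n+2t_2+\sum_j(m_j-1)$, which yields exactly $4n-t_2+\sum_{r\ge 3}(r-4)t_r\le 3c_2(X)$. (Equivalently, you may also blow up the double points, provided you include in $D$ only the exceptional divisors lying over double points and exclude those over points of higher multiplicity.) Your worry about the hypotheses of the inequality is legitimate but is dispatched in the paper's setup without any analysis of $(-1)$-curve contractions: $K_Y+M\equiv\sigma^*K_X+(E+M)\equiv E+M$ is numerically equivalent to an effective divisor, which is what \cite[Cor.~2.1]{Miyaoka1} requires.
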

\begin{proof}
Let ${C}= C_1 + \dots + C_n$ be a configuration of smooth rational curves in $X$. If $\text{Sing}({C})$ denotes the set of singular points of the configuration, we define $S=\lbrace p_j \rbrace_{j=1}^k$ to be the subset of points in $\text{Sing}({C})$ with multiplicity $\geq 3$. Consider the blow-up of $X$ at the points of $S$, namely
\[ \sigma : Y \longrightarrow X;\]
under pull-back along $\sigma$, the configuration ${C}$ on $X$ yields a configuration $\sigma^*{C}$ which consists of the strict transforms of the $C_i$'s and the exceptional divisors. Notice that $\sigma^*{C}$ is again a configuration of smooth rational curves that admits double points only as singularities. Following \cite[Section~2.4]{Miyaoka1}, we set $M:= \tilde{C}_1 + \cdots + \tilde{C}_n$. The idea is to use the Bogomolov-Miyaoka-Yau inequality
\[3c_2(Y) - 3e(M) \geq (K_Y + M)^2,\]
and thus we now need to compute the terms in the above inequality. We see that
\begin{align*}
& c_2(Y) = c_2(X) +k,\\
& e(M) = 2n - t_2,
\end{align*}
which yield $c_2(Y) - e(M) = c_2(X)+k-2n+t_2$.  If $E:=\sum_{j=1}^k E_j$ is the sum of all exceptional divisors, we have
\begin{align*}
& K_Y+M = \sigma^*(K_X +C) - \sum_{j=1}^k (m_j-1)E_j,\\
& (K_Y+M)^2 = C^2 - \sum_{j=1}^k (m_j-1)^2.
\end{align*}
Notice that
\[ K_Y + M = (\sigma^*K_X + E) + M = \sigma^*K_X + (E+M),\]
and as $K_X$ is numerically trivial, also $\sigma^*K_X$ is, and thus $K_Y + M$ is numerically equivalent to an effective divisor. This allows us to use the Bogomolov-Miyaoka-Yau inequality according to \cite[Corollary 1.2]{Miyaoka1}.
Moreover,
\begin{align*}
C^2 &= -2n + 2 \sum_{i<j} C_i . C_j \\
&= -2n + 2 \sum_{r \geq 2} {r\choose 2} t_r \\
&= -2n + 2t_2 +2\sum_{r \geq 3} {r\choose 2} t_r.
\end{align*}
It follows that
\begin{align*}
(K_Y+M)^2 &= -2n + 2 t_2 + 2 \sum_{r \geq 3} {r\choose 2} t_r - \sum_{j=1}^k (m_j -1)^2\\
&= -2n + 2t_2 + 2 \sum_{j=1}^k {m_j \choose 2} - \sum_{j=1}^k (m_j-1)^2\\
&= -2n + 2 t_2 + \sum_{j=1}^k (m_j-1).
\end{align*}
By plugging into the Bogomolov-Miyaoka-Yau inequality, we see that
\begin{align*}
3c_2(X) &\geq 4n - t_2 -3k + \sum_{j=1}^k (m_j-1) \\
&= 4n - t_2 + \sum_{j=1}^k (m_j -4) \\
&= 4n - t_2 + \sum_{r \geq 3} (r -4)t_r,
\end{align*}
and the result follows from the fact that $c_2(X) = 12 (1-q(X) +p_g(X)) \leq 24$, and equality holding if and only if $X$ is a K3 surface.
\end{proof}

Now we can prove the main result of this paper.
\begin{theorem}\label{bound}
Let $X$ be a smooth complex projective surface with numerically trivial canonical class and let ${C}$ be a configuration of smooth rational curves having $n$ irreducible components and only $s \geq 1$ transversal intersection points. Then
$$h(X; {C}) \geq -4 + \frac{2n + t_{2} -3c_2(X)}{s} \geq -4 + \frac{2n + t_{2} -72}{s}$$
\end{theorem}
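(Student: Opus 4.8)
The plan is to deduce the statement directly from Theorem \ref{k3} by an elementary manipulation of the quantities involved; no geometry beyond what is already established is needed. Write $D=\mathcal{C}=C_1+\cdots+C_n=L$, in the notation of the proof of Theorem \ref{k3}, and start from the defining formula $s\cdot h(X;\mathcal{C})=D^2-\sum_{r\geq 2}r^2t_r$ of Definition \ref{def:H-constant TA}. Since the curves are smooth and rational with only transversal intersections, the very same computation carried out in the proof of Theorem \ref{k3} gives
\[
D^2=L^2=-2n+2\sum_{r\geq 2}\binom{r}{2}t_r .
\]
Using $2\binom{r}{2}=r^2-r$, the two sums over $r$ collapse and one obtains the clean identity
\[
s\cdot h(X;\mathcal{C})=-2n-\sum_{r\geq 2}r\,t_r .
\]

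Next I would invoke Theorem \ref{k3}. The only subtlety is that the double points enter that inequality asymmetrically, so I split $\sum_{r\geq 2}r\,t_r=2t_2+\sum_{r\geq 3}r\,t_r$ and bound only the tail. Rearranging $4n-t_2+\sum_{r\geq 3}(r-4)t_r\leq 72$ gives
\[
\sum_{r\geq 3}r\,t_r\leq 72-4n+t_2+4\sum_{r\geq 3}t_r ,
\]
and since $s$ is the total number of singular points one has $\sum_{r\geq 3}t_r=s-t_2$; substituting and adding $2t_2$ yields $\sum_{r\geq 2}r\,t_r\leq 72-4n-t_2+4s$. Combining this with the identity above,
\[
s\cdot h(X;\mathcal{C})=-2n-\sum_{r\geq 2}r\,t_r\geq -2n-(72-4n-t_2+4s)=2n+t_2-72-4s ,
\]
and dividing through by $s\geq 1$ produces exactly the asserted inequality $h(X;\mathcal{C})\geq -4+\frac{2n+t_2-72}{s}$.

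I do not anticipate any genuine obstacle: the argument is pure bookkeeping once Theorem \ref{k3} is in hand. The two points worth a moment of care are keeping the $t_2$ term separate all the way through (because of the way it appears in Theorem \ref{k3}) and correctly using the relation $s=\sum_{r\geq 2}t_r$ to eliminate $\sum_{r\geq 3}t_r$. It is also prudent to check the degenerate case in which $\mathcal{C}$ has only double points, so that the sums over $r\geq 3$ are empty; the identities and inequalities above remain valid verbatim, and the bound reduces to $h(X;\mathcal{C})\geq -4+\frac{2n+t_2-72}{s}$ with $t_2=s$.
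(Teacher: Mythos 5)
Your proposal is correct and follows essentially the same route as the paper: both reduce the Harbourne constant to the identity $s\cdot h(X;\mathcal{C})=-2n-\sum_{r\geq 2}r\,t_r$ (the paper cites the computation in \cite{Pokora} for the step you derive directly from $2\binom{r}{2}=r^2-r$) and then rephrase Theorem \ref{k3} via $\sum_{r\geq 3}t_r=s-t_2$ to bound $-\sum_{r\geq 2}r\,t_r$ from below. Your version merely spells out the bookkeeping that the paper leaves implicit.
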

\begin{proof}
If ${C} = C_1 + \dots + C_n$, the Harbourne constant is computed by $\tilde{C}^2/s$, where $\tilde{C}$ is its strict transform in the blow-up at $s$ singular points of the configuration. We observe that
\[\tilde{C}^2/s = \frac{C^2 - \sum_j m_j^2}{s} = \frac{-2n + I_d - \sum_{r \geq 2} r^2t_r}{s},\]
where $I_d := 2 \sum_{i<j} C_i . C_j$ is the number of incidences of the configuration ${C}$. By the definition of $I_d$, it is straightforward to see that
\[I_d - \sum_{r \geq 2} r^2t_r = - \sum_{r \geq 2} rt_r,\]
and moreover we can rephrase the bound in Theorem \ref{k3} in the following way:
\[-\sum_{r \geq 2} r t_r \geq -4s + 4n + t_2 - 3c_2(X).\]
This yields
\[  h(X; {C})   \geq -4 + \frac{2n+t_2 -3c_2(X)}{s},\]
which complete the proof.
\end{proof}

Let us now define the following number.
\begin{definition}
Let $X$ be a smooth complex projective surface with numerically trivial canonical class. The real number
  $$H_{\text{rational}}(X)=\inf_C h(X;C),$$
  where the infimum is taken over all transversal configurations of smooth rational curves $C\subset X$
   is the \emph{global rational Harbourne constant} of $X$.
\end{definition}
\begin{corollary}
In the setting of Theorem \ref{bound}, one has
$$H_{\text{rational}}(X) \geq -45.$$
\end{corollary}

\begin{proof}
	Let us consider a configuration $C= C_1 + \cdots + C_n$ of $n$ rational curves in $X$. If $s \geq 2$, then we obtain the following chain of inequalities:
    \[ h(X,C) \geq -4 +\frac{2n + t_{2} -72}{s} = -4 - \dfrac{72}{s} + \dfrac{2n+t_2}{s} \geq -40. \]
    
    We are now left with dealing with the case $s=1$. We assume firstly that our configuration $C$ is connected (i.e.~there is no isolated rational curve that does not intersect others). In this situation, Theorem \ref{bound} has the form $h(X,C) \geq -76 + 2n +t_2$. Using the local Harbourne constant of $C$, we get
    \[ -2n - \sum_{r \geq 2} rt_r \geq -76 + 2n + t_2. \]
    As $\sum_{r\geq 2} rt_r =n$, we get that $n \leq 15$, which implies $h(X,C)= -3n \geq -45$. 
    
    Suppose now that we are given an arbitrary configuration, and let $m$ be the positive integer such that there exist exactly $n-m$ curves of $C$ which do not intersect any other component of $C$ (in other words, the configuration contains $n-m$ isolated rational curves). By looking at the local Harbourne constant of $C$, it is straightforward to see that
    \[ h(X,C) = -2n - \sum_{r\geq 2} r t_r =  -2m - \sum_{r \geq 2} r t_r -2(n-m) = -m -2n. \]
    
    Now, by the argument for connected configurations, we get $m \leq 15$. By Theorem \ref{bound}, we deduce the following:
    \[ -m-2n = h(X,C) \geq -76 +2n+t_2 \geq -76 + 2n, \]
   which gives $4n+m \leq 76$. This implies, in particular, that $n \leq 18$. Therefore, we obtain the following restrictions on the configuration $C$:
    \[ n \leq 18, \qquad m \leq 15, \qquad 4n+m \leq 76. \]
    
    If $n=18$, then $2 \leq m \leq 4$. In fact, $3 \leq m \leq 4$, as otherwise there would be $n-m =16$ mutually disjoint rational curves on $X$, which are also disjoint from the subconfiguration of $C$ consisting of $m$ lines meeting at a single point, contradicting a result due to Miyaoka \cite[Proposition~2.1.1]{Miyaoka1}. In this situation, one has $h(X,C) \geq -40$.
    
    If $n=17$, then $2 \leq m \leq 8$, from which one sees that $h(X,C) \geq -42$. If $n = 16$, we get that $2 \leq m \leq 12$, which implies $h(X,C) \geq -44$ . Finally, if $n \leq 15$, then necessarily $2 \leq m \leq 15$ (by the argument for connected configurations), and thus $h(X,C) \geq -45$.
\end{proof}

\section{Examples}

We now give some examples of interesting configurations of smooth rational curves on K3 and Enriques surfaces. We will use Theorem \ref{k3} to give a lower bound for their Harbourne constants. 

\begin{example}[Six general lines in $\mathbb{P}^2$]
In the complex projective plane $\mathbb{P}^2$, consider six lines in general position, and denote this configuration by $\mathcal{L}$. 
\begin{center}
\begin{figure}[h]
\begin{tikzpicture}[line cap=round,line join=round,>=triangle 45,x=10,y=10, scale=0.8]
\clip(-4.28,-4.28) rectangle (22.82,6.84);
\draw [domain=-4.28:22.82] plot(\x,{(-0.6648--0.02*\x)/6.9});
\draw [domain=-4.28:22.82] plot(\x,{(-24.1472--4.24*\x)/2.92});
\draw [domain=-4.28:22.82] plot(\x,{(--52.68-4.22*\x)/3.98});
\draw [domain=-4.28:22.82] plot(\x,{(--6.587264134273421-2.7209310968454425*\x)/12.312792177466502});
\draw [domain=-4.28:22.82] plot(\x,{(--42.80957838345586-2.5087039120105596*\x)/-12.8853392301207});
\draw [domain=-4.28:22.82] plot(\x,{(--40.422439238249204-5.692700839257268*\x)/-0.0710546587099703});
\end{tikzpicture}
\caption{Six lines in general position in $\mathbb{P}^2$.}
\end{figure}
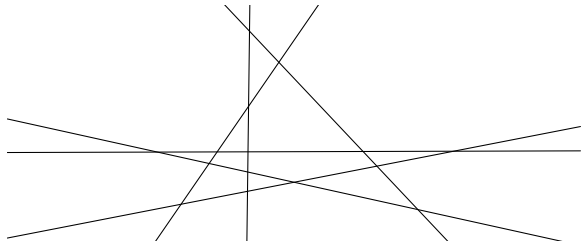
\end{center}
This configuration has only double points as singularities, and their number is the maximum possible of 15. Let $Y$ denote the 2:1 cover of $\mathbb{P}^2$ branched over the configuration $\mathcal{L}$. It is a normal surface with 15 singularities of type $\rm A_1$, namely the points sitting over the intersection of the six lines in $\mathcal{L}$. We can resolve the singularities of $Y$ by blowing up once at each singular point; this yields a smooth surface $X$, which is a K3 surface by general theory. Alternatively, we could have first blown-up $\mathbb{P}^2$ at the singular points of $\mathcal{L}$, and then taken a 2:1 cover branched over the strict transforms of the six lines (which are disjoint after performing a blow-up).

On the K3 surface $X$, we have a new configuration of curves, which we call $\mathcal{C}$, given by the union of the strict transforms of the six lines and the exceptional divisors. Notice that $\mathcal{C}$ consists of $6+15 = 21$ ($-2$)-curves which intersect at 30 points of multiplicity two, thus $n = 21$, $t_2 = 30$ and $t_r = 0$ for all $r \geq 3$. We can compute the Harbourne constant:
\[h(X; \mathcal{C}) = \frac{18-120}{30} \approx -3.4666,\]
which together with the lower bound of Theorem \ref{bound} yields
\[ -4 \leq h(X; \mathcal{C}) = -102/30 \approx -3.4666.\]
\end{example}

\begin{example}[Vinberg configuration 1]
In \cite{Vinberg}, Vinberg described the two most algebraic K3 surfaces: these are the K3 surfaces $X_4$ and $X_3$ of transcendental lattice
\[\begin{pmatrix} 2 & 0 \\ 0 & 2 \end{pmatrix} \qquad \text{and} \qquad \begin{pmatrix} 2 & 1 \\ 1 & 2 \end{pmatrix},\]
respectively. Thanks to results of Shioda and Mitani \cite{shioda-mitani}, Shioda and Inose \cite{shioda-inose}, and to the fact that the class groups of discriminants $-4$ and $-3$ are trivial, it follows that $X_4$ and $X_3$ are the unique K3 surfaces of maximum Picard number and discriminant with the minimum absolute value possible.

We start considering the surface $X_4$, and we recall how to build a model for it which is pretty convenient for our purposes. In the complex projective plane $\mathbb{P}^2$, we consider the configuration $\mathcal{L}$ of lines given by
\[ \mathcal{L}: \ xyz(x-y)(x-z)(y-z) =0.\]
This configuration has three double points and four triple points. By blowing up $\mathbb{P}^2$ in the four triple points, we obtain a del Pezzo surface $S$ with a configuration of ten ($-1$)-curves, namely the strict transforms of the six lines of $\mathcal{L}$ together with the four exceptional divisors. These ten ($-1$)-curves form a divisor $B$ on $S$, which is simple normal crossing, with only 15 double points as singularities. After blowing up these 15 double points, we get a surface $S'$, with 15 ($-1$)-curves (the exceptional divisors) and 10 ($-4$)-curves (the strict transforms of the irreducible components of $B$, which are now mutually disjoint). By taking a 2:1 cover of $S'$ branched along the 10 ($-4$)-curves, we obtain the K3 surface $X_4$, equipped with a configuration $\mathcal{V}$ of 25 smooth rational curves and 30 double points, which is described by the Petersen graph in Figure \ref{petersen}.
\begin{center}
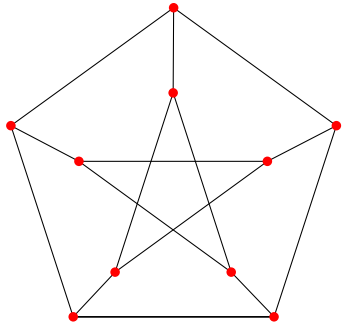
\begin{figure}[h]
\definecolor{ffqqqq}{rgb}{1,0,0}
\begin{tikzpicture}[line cap=round,line join=round,>=triangle 45,x=15,y=15,scale=1.2]
\clip(7.42,1.04) rectangle (23.01,9.13);
\draw (13.24,5.07)-- (17.2,5.07);
\draw (13.24,5.07)-- (16.44,2.74);
\draw (17.2,5.07)-- (14,2.74);
\draw (15.22,6.51)-- (14,2.74);
\draw (15.22,6.51)-- (16.44,2.74);
\draw (13.12,1.8)-- (17.34,1.8);
\draw (15.23,8.3)-- (15.22,6.51);
\draw (11.81,5.82)-- (13.24,5.07);
\draw (17.2,5.07)-- (18.65,5.82);
\draw (16.44,2.74)-- (17.34,1.8);
\draw (14,2.74)-- (13.12,1.8);
\draw (13.12,1.8)-- (17.34,1.8);
\draw (17.34,1.8)-- (18.65,5.82);
\draw (18.65,5.82)-- (15.23,8.3);
\draw (15.23,8.3)-- (11.81,5.82);
\draw (11.81,5.82)-- (13.12,1.8);
\draw (13.12,1.8)-- (17.34,1.8);
\begin{scriptsize}
\fill [color=ffqqqq] (14,2.74) circle (1.5pt);
\fill [color=ffqqqq] (16.44,2.74) circle (1.5pt);
\fill [color=ffqqqq] (17.2,5.07) circle (1.5pt);
\fill [color=ffqqqq] (15.22,6.51) circle (1.5pt);
\fill [color=ffqqqq] (13.24,5.07) circle (1.5pt);
\fill [color=ffqqqq] (13.12,1.8) circle (1.5pt);
\fill [color=ffqqqq] (17.34,1.8) circle (1.5pt);
\fill [color=ffqqqq] (18.65,5.82) circle (1.5pt);
\fill [color=ffqqqq] (15.23,8.3) circle (1.5pt);
\fill [color=ffqqqq] (11.81,5.82) circle (1.5pt);
\end{scriptsize}
\end{tikzpicture}
\caption{The Petersen graph. \label{petersen}}
\end{figure}
\end{center}
The 15 edges of the graph correspond to the exceptional divisors and the $10$ red dots correspond to curves from $B$; therefore, $n=25$, $t_2 = 30$ and $t_r = 0$ for all $r \geq 3$. We now compute the Harbourne constant for this configuration, we have:
\[h(X; \mathcal{V}) = \frac{(C_{1} + \cdots + C_{25})^2 - \sum_j m_j^2 }{s} = \frac{10-120}{30} \approx -3.666,\]
as we somehow expected from the devilish shape of the Petersen graph; together with the bound in Theorem \ref{bound}, this yields
\[-3.7333 \approx -112/30 \leq h(X;\mathcal{V}) = -110/30 \approx -3.666.\]
Vinberg's $X_{4}$ surface appears also in a different interesting context of the maximal possible cardinality of a finite  complete family of incident planes in $\mathbb{P}^{5}$ -- we refer to \cite{EPW} for details and results.
\end{example}

\begin{example}[Vinberg configuration 2]
Turning to the K3 surface $X_3$, Vinberg \cite{Vinberg} provides the reader with a particularly nice birational model, a complete intersection of a quadric and a cubic in $\mathbb{P}^4=\mathbb P^4_{(x_1:x_2:x_3:y:z)}$, which we call $Y$:
\[
\begin{cases}
y^2 = x_1^2 + x_2^2 + x_3^3 - 2(x_2 x_3 + x_1 x_3 + x_1 x_2) \\
z^3 = x_1 x_2 x_3 
\end{cases}
.\]
This model contains 9 singular points of type $\rm A_2$, namely:
\begin{align*}
&p_1=[0:1:0:1:0], \quad p_2=[0:1:0:-1:0], \quad p_3=[0:1:1:0:0],\\
&p_4=[1:1:0:0:0], \quad p_5=[0:0:1:1:0], \ \ \quad p_6=[0:0:1:-1:0], \\
&p_7=[1:0:1:0:0], \quad p_8=[1:0:0:1:0], \ \ \quad p_9=[1:0:0:-1:0]. 
\end{align*}
There are 6 lines lying in $X_3$, each of which contains three of the singular points, in such a way that each singular point is the intersection point of exactly two of the lines. More precisely, the lines are
\[L_{ijk} : \ z=x_i=y-(x_j - x_k)=0,\]
for any $i,j,k \in \lbrace 1,2,3 \rbrace$, $i \neq j\neq k \neq i$. 

The configuration consisting of these 6 lines is shown in Figure \ref{exagon}.
\begin{center}
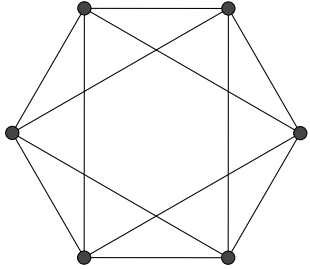
\begin{figure}[h]
\begin{tikzpicture}[line cap=round,line join=round,>=triangle 45,x=10,y=10]
\clip(-4.3,-4.82) rectangle (18.7,6.3);
\draw (8.995279811725105,5.411178196587349)-- (0.82,0.7);
\draw (11.715279811725104,0.6911781965873476)-- (3.547639905862554,5.415589098293676);
\draw (8.987639905862553,-4.024410901706327)-- (8.995279811725105,5.411178196587349);
\draw (3.54,-4.02)-- (3.547639905862554,5.415589098293676);
\draw (3.54,-4.02)-- (11.715279811725104,0.6911781965873476);
\draw (8.987639905862553,-4.024410901706327)-- (0.82,0.7);
\draw (0.82,0.7)-- (3.547639905862554,5.415589098293676);
\draw (3.547639905862554,5.415589098293676)-- (8.995279811725105,5.411178196587349);
\draw (8.995279811725105,5.411178196587349)-- (11.715279811725104,0.6911781965873476);
\draw (11.715279811725104,0.6911781965873476)-- (8.987639905862553,-4.024410901706327);
\draw (8.987639905862553,-4.024410901706327)-- (3.54,-4.02);
\draw (3.54,-4.02)-- (0.82,0.7);
\begin{scriptsize}
\draw [fill=uuuuuu] (0.82,0.7) circle (2.5pt);
\draw [fill=uuuuuu] (3.54,-4.02) circle (2.5pt);
\draw [fill=uuuuuu] (8.987639905862553,-4.024410901706327) circle (2.5pt);
\draw [fill=uuuuuu] (11.715279811725104,0.6911781965873476) circle (2.5pt);
\draw [fill=uuuuuu] (8.995279811725105,5.411178196587349) circle (2.5pt);
\draw [fill=uuuuuu] (3.547639905862554,5.415589098293676) circle (2.5pt);
\end{scriptsize}
\end{tikzpicture}
\caption{Dual graph of the six lines $L_{ij}$ on $Y$. \label{exagon}}
\end{figure}
\end{center}
We can resolve the singularities of $Y$ by blowing up twice each singular point, in order to get a smooth K3 surface, namely $X_3$: resolving each singularity yields two exceptional divisors, which are in fact ($-2$)-curves as $X_3$ is a K3 surface. The exceptional divisors together with the strict transforms of the six lines on $Y$ yields a new configuration, which we call $\mathcal{W}$. It consists of $n = 6 + 2 \cdot 9 = 24$ smooth rational curves and it has only double points as singularities, thus $t_2 = 3 \cdot 9 = 27$ and $t_r = 0$ for all $r \geq 3$. We now get the Harbourne constant:
\[h(X ;\mathcal{W}) = \frac{(C_1 + \cdots + C_{24})^2 - \sum_{r \geq 2} r^2 t_r}{s} = \frac{6 - 27 \cdot 4}{27} \approx -3.777,\]
and by Theorem \ref{bound} it follows that
\[-3.888 \approx -35/9 \leq h(X;\mathcal{W}) = -3.777.\]
\end{example}

\begin{example}[$16_6$-configuration]
Let $A$ be an abelian surface with an irreducible principal polarization. We are going to be interested in the singular Kummer surface $K$ given by the quotient of $A$ by the involution $(-1)_A$ (for a detailed account, see \cite[Chapter 10, Section 2]{birkenhake-lange}). Suppose $L$ is a symmetric line bundle on $X$ defining the principal polarization, then the map $\varphi_{L^2} : X \longrightarrow \mathbb{P}^3$ defined by the linear system $\vert L^2 \vert$ factors through an embedding of $K$ in $\mathbb{P}^3$. The singular Kummer surface $K \subset \mathbb{P}^3$ has 16 ordinary double points as singularities, namely the images of the 2-divison points. Moreover, the 16 line bundles algebraically equivalent to $L$ yield 16 planes which are tangent to $K$ and intersect $K$ along 16 conics (these planes are typically called tropes). This gives rise to the $16_6$ configuration in the Kummer surface: there are 16 points and 16 planes, each point is contained in exactly 6 planes, and each plane contains exactly 6 points. The points at which each pair of conics intersects are points of transversal intersection, as the conics lie in different planes.

Consider the blow-up at the 16 singular points of $K \subset \mathbb{P}^3$. As these are ordinary double points, one blow-up at each point is enough to resolve the singularities of $K$, and so we obtain a smooth K3 surface $X$. Since the conics of $K$ intersect transversally, locally over the blown-up points we get a tree of smooth rational ($-2$)-curves which consists of the exceptional divisor being intersected by the strict transforms of the six conics (which are now mutually disjoint).

Consider the configuration $\mathcal{C} = C_1 + \dots + C_{32} $ of ($-2$)-curves in $X$ consisting of the 16 exceptional divisors and the strict transforms of the 16 conics on $K$: these curves only meet in double points because we have blown-up all the intersection points of the conics, and the number of double points is exactly $6 \cdot 16=96$. Therefore, for the configuration $\mathcal{C}$ we have $n=32$, $t_2 = 96$ and $t_r =0$ for $r \geq 3$. The Harbourne constant is then
\[h(X; \mathcal{C}) = \frac{(C_1 + \cdots + C_{32})^2 - \sum_{r \geq 2} r^2 t_r}{s} = -8/3 \approx -2.666,\]
and together with the lower bound of Theorem \ref{bound} this shows that
\[-3.08333 \approx -296/96 \leq h(X;\mathcal{C}) = -8/3 \approx -2.666.\] 
\end{example}

\begin{example}[Schur quartic surface]
Let $S$ be the quartic surface in $\mathbb{P}^3$ given by
\[S : \ x^4 - xy^3 = z^4 - zw^3.\]
The surface $S$ is called the Schur quartic surface, and it is the surface that achieves the upper bound of 64 lines for quartic surfaces (see, for example, \cite{RS}). The 64 lines on $S$ are divided into two classes, namely lines of the 1\textsuperscript{st} kind and of the 2\textsuperscript{nd} kind. Lines of different kind can be distinguished according to the singular fibers of the fibration they induce on $S$; the singular fibers of an elliptic fibration induced by a line of the 1\textsuperscript{st} or 2\textsuperscript{nd} kind are depicted in Figures \ref{first} and \ref{second}
\begin{figure}[h]
\begin{center}
\begin{tikzpicture}[line cap=round,line join=round,>=triangle 45,x=25,y=25]
\clip(-1.2303755462612627,-1.4922814276574417) rectangle (10.508573231013045,4.0976941805684195);
\draw (-1.1,0.36)-- (10.325903810370372,0.3744775792592566);
\draw (1.9321642844444464,3.7610818666666637)-- (1.9,-1.12);
\draw (0.82,-0.92)-- (4.2,3.06);
\draw (0.29445456000000186,3.656100474074071)-- (3.48,-0.86);
\draw (7.070049075555558,3.917122391111108)-- (7.083596133333333,-1.3244204);
\draw (3.5004001073669198,2.006629230824671) node[anchor=north west] {$\times 4$};
\draw (9.287060042548877,1.9548701974151723) node[anchor=north west] {$\times 6		$};
\draw [rotate around={54.40922140811022:(7.437722543703704,1.3407835792592553)}] (7.437722543703704,1.3407835792592553) ellipse (1.9894018560916125cm and 1.4642608167523221cm);
\end{tikzpicture}
\end{center}
\caption{Singular fibers induced by a line of 1\textsuperscript{st} kind. \label{first}}
\end{figure}

\begin{figure}[h]
\begin{center}
\begin{tikzpicture}[line cap=round,line join=round,>=triangle 45,x=25,y=25]

\clip(2.8481528075140514,-0.7726442721791611) rectangle (10.051873737746615,2.657699027931594);
\draw (2.8700530084902187,0.8086963455149494)-- (10.047331709117763,0.8086963455149494);
\draw (4.713243164759446,2.2790907674008487)-- (4.713243164759446,-0.5998566096550642);
\draw (3.7419478118206824,2.2790907674008487)-- (5.6426120995857465,-0.5928688733029868);
\draw (5.68453851769821,2.2790907674008487)-- (3.7838742299331463,-0.5928688733029869);
\draw (8.179160395389857,2.3489681309216235)-- (8.179160395389857,-0.5579301915425995);
\draw (8.808056667076826,2.3629436036257783)-- (7.193889569746938,-0.5719056642467544);
\draw (7.550264123702888,2.3629436036257783)-- (9.164431221032777,-0.5719056642467543);
\draw (5.357385406669133,1.6349485254911653) node[anchor=north west] {$\times 2$};
\draw (8.546334178253565,1.6095385751199747) node[anchor=north west] {$\times 4$};
\draw (9.753306820885124,1.37449653418646) node[anchor=north west] {$\ell$};
\end{tikzpicture}
\end{center}
\caption{Singular fibers induced by a line of 2\textsuperscript{nd} kind. \label{second}}
\end{figure}
The configuration $\mathcal{S}$ of lines on $S$ counts 64 lines, 8 quadruple points, 64 triple points, and 336 double points. We can extract a subconfiguration $\check{\mathcal{S}}$ of $\mathcal{S}$, which is obtained by only considering the lines of the 2\textsuperscript{nd} kind: this configuration consists of 16 lines and has only 8 quadruple points as singularities. We can now compute the Harbourne constant in this case:
\[ h(S;  \check{\mathcal{S}}) = -8.\]
The lower bound given by Theorem \ref{bound} finally yields
\[-9 \leq h(S;\check{\mathcal{S}}) =-8.\]
It is interesting to notice that the same numerical values are achieved by means of the Bauer configuration of lines on the Fermat quartic surface
\[F : \ x^4 +y^4 + z^4 + w^4 =0,\]
as it is shown in \cite[Example 4.3]{Pokora}. Bauer configuration consists of $16$ lines and has only $8$ quadruple points as singularities, so both the Harbourne constant and the lower bound given by Theorem \ref{bound} are the same as above, namely
\[-9 \leq h(S;F) =-8.\] 
However, we remark that, in the case of the Fermat surface, all lines are of the 1\textsuperscript{st} kind.
\end{example}

\begin{example}[Double Kummer pencil]
Let $E$ and $E'$ be two elliptic curves. Recall that any elliptic curve is a 2:1 cover of $\mathbb{P}^1$ ramified at 4 points, and that the 4 ramification points are exactly the 2-torsion points of the elliptic curve (to see this, work with an elliptic curve in Legendre form). Consider the product (abelian) surface $E \times E'$, which comes with two projections onto the factors. We can see a configuration $\mathcal{E}$ of 8 elliptic curves on $E \times E'$: these are the fibers of $p$ over the 2-torsion points of $E$, which we call $C_i$ ($ 1 \leq i \leq 4$), together with the fibers of $p'$ over the 2-torsion points of $E'$, denoted by $D_j$ ($1 \leq j \leq 4$). Each $C_i$ intersects all $D_j$'s, and viceversa, thus ${\rm Sing} (\mathcal{E})$ consists of 16 points, which in turn are the 2-torsion points of $E \times E'$. We can now consider the K3 surface ${\rm Km}(E \times E')$, the Kummer surface of $E \times E'$, obtain by first quotienting by the action of $(-1)_{E \times E'}$ and then resolving the 16 singularities of type $\rm A_1$. The configuration $\mathcal{E}$  yields a configuration $\mathcal{K}$ of ($-2$)-curves on ${\rm Km}(E \times E')$, which consists of the images of the curves $C_i$ and $D_j$ ($1 \leq i,j \leq 4$) in ${\rm Km}(E \times E')$ and the 16 exceptional divisors (with their reduced structure). The configuration $\mathcal{K}$ is the \textit{double Kummer pencil configuration}, and it consists of $n=24$ ($-2$)-curves intersecting only at $t_2 = 2 \cdot 16 = 32$ double points (see Figure \ref{kummer}).

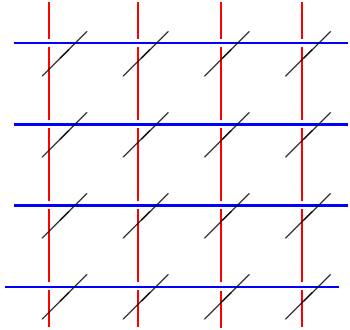
\begin{figure}[ht]
\begin{center}
\setlength{\unitlength}{1.08mm}
\begin{picture}(60, 47)
%
%

{\color{blue}
\put(7, 10){\line(1, 0){41}}}

{\color{blue}
\put(7, 20){\line(1, 0){41}}
\put(7, 30){\line(1, 0){41}}
\put(7, 40){\line(1, 0){41}}}
%
%
{\color{red}
\put(10, 5.1){\line(0, 1){4.4}}
\put(10, 10.6){\line(0, 1){8.9}}
\put(10, 20.6){\line(0, 1){8.9}}
\put(10, 30.6){\line(0, 1){8.9}}
\put(10, 40.6){\line(0, 1){4.4}}
\put(21, 5.1){\line(0, 1){4.4}}
\put(21, 10.6){\line(0, 1){8.9}}
\put(21, 20.6){\line(0, 1){8.9}}
\put(21, 30.6){\line(0, 1){8.9}}
\put(21, 40.6){\line(0, 1){4.4}}}

{\color{red}
\put(30, 5){\line(0, 1){4.4}}
\put(30, 10.6){\line(0, 1){8.9}}
\put(30, 20.6){\line(0, 1){8.9}}
\put(30, 30.6){\line(0, 1){8.9}}
\put(30, 40.6){\line(0, 1){4.4}}
\put(40, 5.1){\line(0, 1){4.4}}
\put(40, 10.6){\line(0, 1){8.9}}
\put(40, 20.6){\line(0, 1){8.9}}
\put(40, 30.6){\line(0, 1){8.9}}
\put(40, 40.6){\line(0, 1){4.4}}}
%
%

\put(8, 6){\line(1,1){5.5}}

\put(8, 16){\line(1,1){5.5}}
\put(8, 26){\line(1,1){5.5}}
\put(8, 36){\line(1,1){5.5}}
%

\put(18, 6){\line(1,1){5.5}}
\put(18, 16){\line(1,1){5.5}}
\put(18, 26){\line(1,1){5.5}}
\put(18, 36){\line(1,1){5.5}}
%

\put(28, 6){\line(1,1){5.5}}
\put(28, 16){\line(1,1){5.5}}
\put(28, 26){\line(1,1){5.5}}

\put(28, 36){\line(1,1){5.5}}
%

\put(38, 6){\line(1,1){5.5}}
\put(38, 16){\line(1,1){5.5}}
\put(38, 26){\line(1,1){5.5}}

\put(38, 36){\line(1,1){5.5}}
%
\end{picture}
\end{center}
\vskip -.3cm
\caption{The double Kummer pencil configuration. \label{kummer}}
\end{figure}

This yields the following Harbourne constant:
\[ h({\rm Km}(E \times E'), \mathcal{K}) = -14 / 4 \approx -3.5,\]
which combined with Theorem \ref{bound} results in 
\[-3.75 =-15/4 \leq h(X;\mathcal{K}) = -14/4 = -3.5.\]
\end{example}

\begin{example}[Enriques surfaces covered by symmetric quartic surfaces]
This example is borrowed from a recent paper of Mukai and Ohashi \cite{mukai-ohashi}. Let $\bar{X}$ be the quartic in $\mathbb{P}^3$ given as the zero locus of
\[ \bar{X} : \ \Big(\sum_{i < j } x_i x_j \Big)^2 = k x_0 x_1 x_2 x_3.\]
This is a singular hypersurface, with four singularities of type $\rm D_4$, namely the vertices of the fundamental tetrahedron. The coordinate planes cut $\bar{X}$ in conics with multiplicity two, which are also called \textit{tropes}, and each one of these conics passes through 3 of the singular points. After resolving the $\rm D_4$-singularities, we obtain a K3 surface $X$, which is equipped with an interesting configuration $\mathcal{C}$ of ($-2$)-curves, namely the exceptional divisors coming from the resolution of the singularities and the strict transforms of the tropes. The configuration $\mathcal{C}$ is described by the dual graph in Figure \ref{cube}, where all the intersections are points of multiplicity two, thus $n = 20$, $t_2 = 24$ and $t_r =0$ for all $r \geq 3$.
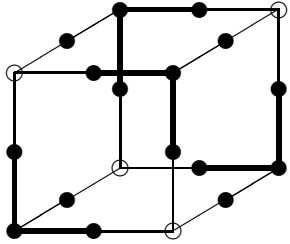
\begin{figure}[ht]
\begin{center}
\begin{picture}(120,90)
\linethickness{0.7pt}
\put(0,0){\line(1,0){60}}
\put(60,0){\line(0,1){60}}
\put(60,60){\line(-1,0){60}}
\put(0,60){\line(0,-1){60}}
\put(0,60){\line(5,3){40}}
\put(40,84){\line(1,0){60}}
\put(100,84){\line(-5,-3){40}}
\put(100,84){\line(0,-1){60}}
\put(100,24){\line(-5,-3){40}}
\put(0,0){\line(5,3){40}}
\put(40,24){\line(0,1){30}}
\put(40,24){\line(1,0){30}}
\put(0,60){\circle{6}}
\put(60,0){\circle{6}}
\put(40,24){\circle{6}}
\put(100,84){\circle{6}}
\linethickness{1.7pt}
\put(0,0){\circle*{6}}
\put(30,0){\circle*{6}}
\put(0,30){\circle*{6}}
\put(20,12){\circle*{6}}
\put(0,0){\line(0,1){30}}
\put(0,0){\line(1,0){30}}
\put(0,0){\line(5,3){20}}
\put(60,60){\circle*{6}}
\put(60,30){\circle*{6}}
\put(30,60){\circle*{6}}
\put(80,72){\circle*{6}}
\put(60,60){\line(0,-1){30}}
\put(60,60){\line(-1,0){30}}
\put(60,60){\line(5,3){20}}
\put(40,84){\circle*{6}}
\put(40,54){\circle*{6}}
\put(70,84){\circle*{6}}
\put(20,72){\circle*{6}}
\put(40,84){\line(0,-1){30}}
\put(40,84){\line(1,0){30}}
\put(40,84){\line(-5,-3){20}}
\put(100,24){\circle*{6}}
\put(100,54){\circle*{6}}
\put(70,24){\circle*{6}}
\put(80,12){\circle*{6}}
\put(100,24){\line(0,1){30}}
\put(100,24){\line(-1,0){30}}
\put(100,24){\line(-5,-3){20}}

\end{picture}
\end{center}
\caption{The configuration $\mathcal{C}$ of smooth rational curves on $X$. \label{cube}}
\end{figure}
We can compute the Harbourne constant for this configuration:
\[ h(X; \mathcal{C}) = -11/3 \approx 3.666,\]
and thanks to Theorem \ref{bound} we also see that
\[-4.333 \approx -13/3 \leq h(X; \mathcal{C}) = -11/3 \approx 3.666.\]

From the K3 surface $X$, we can construct an Enriques surface with an interesting configuration of smooth rational curves. The singular surface $\bar{X}$ is endowed with the standard Cremona transformation
\[ \varepsilon: [x_0 : x_1 : x_2 : x_3] \longmapsto [x_0^{-1} : x_1 ^{-1} : x_2 ^{-1} : x_3^{-1} ],\]
which extends to a morphism on the blown-up surface $X$. For general values of $k$ (precise conditions are given in \cite[Page 1]{mukai-ohashi}), there are no fixed points of $\varepsilon$ on $X$, and thus the quotient $X / \varepsilon =: S$ is an Enriques surface. The morphism $\varepsilon$ acts on the cube-shaped diagram in Figure \ref{cube} by point symmetry (i.e.,~symmetry with respect to the center of the cube), and thus the quotient diagram is the tetrahedron graph in Figure \ref{tetra}, also known as the 10A configuration in Mukai-Ohashi's notation. 

\begin{figure}[th]
\begin{center}
\begin{picture}(80,50)

\linethickness{0.7pt}
\put(0,0){\circle*{6}}
\put(30,0){\circle*{6}}
\put(60,0){\circle*{6}}
\put(30,50){\circle*{6}}
\put(15,25){\circle*{6}}
\put(45,25){\circle*{6}}
\put(72,24){\circle*{6}}
\put(66,12){\circle*{6}}
\put(51,38){\circle*{6}}
\put(36,12){\circle*{6}}

\put(0,0){\line(1,0){60}}
\put(0,0){\line(3,5){30}}
\put(60,0){\line(-3,5){30}}
\put(30,52){\line(3,-2){40}}
\put(72,24){\line(-1,-2){12}}
\put(0,0){\line(3,1){72}}

\end{picture}
\end{center}
\caption{The $10A$ configuration on the Enriques surface $S$. \label{tetra}}
\end{figure}
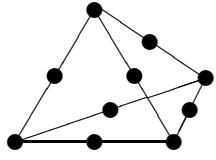

The graph describes the interaction of the images of the rational curves on $X$ modulo quotient by $\varepsilon$. We now compute the Harbourne constant for such a configuration of curves:
\[ h(X;10A) = -11/3 \approx 3.666.\]
For Enriques surfaces, the bound in Theorem \ref{bound} takes the stronger form 
\[h(X;10A) \geq -4 + \frac{2n + t_2 - 36}{s} = -13/3,\]
and thus
\[ - 4.333 \approx -13/3 \leq h(X;10A) = -11/3 \approx 3.666.\]
\end{example}

\begin{example}[A Hessian K3 surface and its Enriques quotient]
\label{ex:DvG}
The last example we would like to present uses the construction of the Hessian K3 surface associated to a cubic surface in $\mathbb{P}^3$; details can be found in \cite{DvG}. Let $S$ be a smooth cubic surface defined by the Sylvester form
\[ S : \qquad \sum_{i=0}^4 x_1^3 = \sum_{i=0}^4 x_i =0.\]
The union of the five planes in $\mathbb{P}^3$ defined by $x_i  = 0$ is called the pentahedron of $S$. The $10$ edges $L_{ijk}$ of the pentahedron are lines on $S$. We can consider the Hessian $Y$ of $S$: it is the (singular) surface defined by
\[Y :  \qquad (x_0 x_1 x_2 x_3 x_4) \sum_{i=0}^4 \frac{1}{x_i} = \sum_{i=0}^4 x_i =0.\]
The ten lines $L_{ij}$ lie on $Y$, and the vertices $P_{ijk}$ of the pentahedron are the singular points of $Y$ (double points). The desingularization $X$ of $Y$ is a K3 surface, called the Hessian K3 surface associate to $S$. There are $20$ rational curves on $X$, $10$ of which are the strict transforms of the $L_{ij}$'s, and we will call them by $N_{ij}$. The remaining ones are the curves arising from the resolution of the singularities at the $P_{ijk}$'s, and they will be denoted by $N_{ijk}$. 

We can find $20$ more rational curves on $X$ by looking at the {Eckardt points} of $S$. A smooth cubic surface has $27$ lines and $45$ plane sections which are unions of three lines. In case three coplanar lines meet at a single point, this point is called an \emph{Eckardt point}. Each Eckardt point on $S$ yields a pair of lines on $Y$ meeting at one of the $P_{ijk}$'s, for a total of $20$ lines. The strict transforms of these extra lines yield $20$ new rational curves on $X$. We can read off the intersection numbers of all these curves from \cite[Sections 1-2]{DvG}: the configuration $\mathcal{C}$ given by the $40$ aforementioned rational curves has $130$ double points. The Harbourne constant is
\[h(X;\mathcal{C}) = \frac{180 - 520}{130} = -34/13 \approx -2.615384,\]
combining with Theorem \ref{bound}, we get
\[ -2.9384615 \approx -4 +\frac{138}{130} \leq h(X;\mathcal{C}) =  -34/13 \approx -2.615384.\]

From $X$, we can cook up an Enriques surface $\bar{X}$: every Hessian quartic surface is equipped with a birational involution, which becomes a fixed-point-free morphism on the Hessian K3 surface, yielding an Enriques surface by taking the quotient \cite{Dolg-Keum}. Consider $X$ as above, equipped with its Enriques involution $\tau$: this automorphism swaps $N_{ijk}$ and $N_{lm}$ (for $\lbrace i,j,k,l,m \rbrace = \lbrace 0,1,2,3,4 \rbrace$), and it also swaps the two rational curves arising from each Eckardt point. As none of the curves in $\mathcal{C}$ is fixed by $\tau$, on the Enriques quotient $\bar{X}$, we obtain a configuration $\bar{\mathcal{C}}$ of 20 rational curves, meeting at 65 double points. As the local intersections are preserved, the Harbourne constant and its lower bound remain unchanged:
\[ -2.9384615 \approx -4 +\frac{69}{65} \leq h(\bar{X};\bar{\mathcal{C}}) =  -34/13 \approx -2.615384.\]
\end{example}

\paragraph*{\emph{Acknowledgement.}}
The present paper has grown out from discussions the authors had during the time when Piotr Pokora was visiting Roberto Laface at the Leibniz Universit\"at Hannover. It is a great pleasure to thank the Institut f\"ur Algebraische Geometrie for generous fundings and excellent working conditions. The authors would like to express their gratitude to Klaus Hulek and Matthias Sch\"utt for stimulating discussions about the topic, Igor Dolgachev for pointing out Example \ref{ex:DvG}, and Bert van Geemen for sharing his insights on the subject. The first-named author would like to thank Davide Cesare Veniani for sharing his insights on some of the examples. The second author would like to express his gratitude to  \L ukasz Sienkiewicz for pointing out Vinberg's paper \cite{Vinberg} and his construction, and to Stefan M\"uller-Stach for useful comments. At last, we would like to warmly thank the anonymous referee for his/her very useful comments and suggestions which allowed to improve our results.

The second author is partially supported by National Science Centre Poland Grant 2014/15/N/ST1/02102 and during the project he was a fellow of SFB 45 \emph{Periods, moduli spaces and arithmetic of algebraic varieties}.


\end{document}